\newcommand\thickbar[1]{\accentset{\rule{.4em}{.8pt}}{#1}}
\newtheorem{theorem}{Theorem}
\newtheorem{definition}[theorem]{Definition}
\newtheorem{prop}[theorem]{Proposition}
\newtheorem{lemma}[theorem]{Lemma}
\newtheorem{cor}[theorem]{Corollary}
\theoremstyle{plain}
\newtheorem*{remark}{Remark}
\newtheorem*{theorem*}{Theorem}
\newcommand{\Fp}[0]{\mathbb{F}_p}
\newcommand{\val}[0]{\text{val}_p}
\newcommand{\modp}[0]{\hspace{2pt} ( \emph{mod } p )}
\title{Universal $p$-ary Designs}
\author{Liam Jolliffe}
\date{}
\begin{document}
\maketitle
\begin{abstract}
We investigate $p$-ary $t$-designs which are simultaneously designs for all $t$, which we call universal $p$-ary designs. Null universal designs are well understood due to Gordon James via the representation theory of the symmetric group. We study non-null designs and determine necessary and sufficient conditions on the coefficients for such a design to exist. This allows us to classify all universal designs, up to similarity.
\end{abstract}

\section{Introduction}

We shall briefly recall some definitions and facts from the theory of designs, more details can be found in \cite{Dembowski}. 
\begin{definition}
Let $[v]:=\{1,2,\dots,v\}$ be a finite set and $t\le s \le v$ be integers. An \emph{integral $t$-design on $[v]$ of constant block size} $s$ is a function $$u:[v]^s\to \mathbb{Z},$$ where $[v]^s$ is the set of all subsets of $[v]$ of size $s$, such that 
$$\hat{u}(Z):=\sum_{Y\supseteq Z}u(Y) = \mu_t \hspace{2em}\forall Z \in [v]^t $$
We call $\mu_t$ the \emph{coefficient} of the design and if $\mu_t=0$ then we say $u$ is a \emph{null-design}. We say that $\hat{u}$ is induced from $u$ and we shall denote its restriction to sets of size $j$ by $_j\hat{u}$.
\end{definition}
Similarly a $p$-ary $t$-design is a function $u:[v]^s\to \mathbb{F}_p$ such that $\hat{c}$ is constant on sets of size $t$. It is well known that integral $t$-design is also an integral $j$-design for all integers $0\le j\le t$, however this is not true over fields of positive characteristic. If a $p$-ary design, $u$, of constant block size, $s$, is a $j$ design for all $j < s$ then we say that $u$ is a \emph{universal} design for the partition $(v - s,s)$.  Graver and Jurkat \cite{GJ} determined when universal integral designs exist. 
\begin{theorem}\label{Existence of integral designs}\cite{GJ}
Let $t\le s \le v$ be integers. There exists a universal integral design for $(v-s,s)$ with coefficients $\mu_j$ if and only if $\mu_{j+1}=\frac{s-j}{v-j}\mu_j$ for $0\le j<t$.
\end{theorem}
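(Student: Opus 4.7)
The plan is to handle the two implications separately: necessity by double counting, and sufficiency by exhibiting an integral design realizing any prescribed tuple of integer coefficients satisfying the recursion.

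For necessity, fix $j$ with $0 \le j < t$ and $Z \in [v]^j$. The key identity comes from double counting the sum $\sum u(Y)$ over pairs $(W,Y)$ with $Z \subsetneq W \subseteq Y$, $|W|=j+1$, $|Y|=s$. Summing over $W$ first, there are $v-j$ choices (one for each element of $[v]\setminus Z$), each contributing $\hat{u}(W)=\mu_{j+1}$, for a total of $(v-j)\mu_{j+1}$. Summing over $Y$ first, each $Y\supseteq Z$ of size $s$ is counted $|Y\setminus Z|=s-j$ times, yielding $(s-j)\hat{u}(Z)=(s-j)\mu_j$. Equating gives $(v-j)\mu_{j+1}=(s-j)\mu_j$, the claimed recursion.

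For sufficiency, given integer coefficients $(\mu_0,\ldots,\mu_t)$ satisfying the recursion, I must construct an integral design realizing them. The natural candidate is a scalar multiple of the uniform design $u \equiv 1$, for which $\hat{u}(Z)=\binom{v-|Z|}{s-|Z|}$; these coefficients automatically satisfy the recursion with $\mu_0 = \binom{v}{s}$. Whenever $\binom{v}{s}$ divides $\mu_0$, the rescaled uniform design $u \equiv \mu_0/\binom{v}{s}$ does the job.

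The main obstacle is the case where the recursion forces $\mu_j \in \mathbb{Z}$ for all $j\le t$ without $\binom{v}{s}$ dividing $\mu_0$. For instance, with $v=4$, $s=2$, $t=1$ the tuple $(\mu_0,\mu_1)=(2,1)$ is realised by the ``matching'' $u(\{1,2\})=u(\{3,4\})=1$, $u\equiv 0$ elsewhere, even though no rescaling of the uniform design does so. To handle such cases I would study the coefficient map from the $\mathbb{Z}$-module of universal integral designs to $\mathbb{Z}^{t+1}$ and argue that its image is exactly the lattice cut out by the recursion. A plausible strategy is induction on $v$ via a deletion/restriction argument reducing designs on $[v]$ to designs on $[v-1]$; alternatively, one could realise the minimal positive value of $\mu_0$ permitted by integrality using explicit designs built from partitions of $[v]$ into blocks of size $s$ (and their near-relatives when $s\nmid v$), and combine these with the uniform design to span the full lattice.
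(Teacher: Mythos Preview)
Your necessity argument is correct and is the same computation the paper carries out, only in different clothing: the paper phrases it via inclusion matrices, using $A_j^{j+1}(v)A_{j+1}^s(v)=\binom{s-j}{1}A_j^s(v)$ applied to $u$ to obtain $(v-j)\mu_{j+1}=(s-j)\mu_j$. Your double count of pairs $(W,Y)$ is exactly the entrywise unpacking of that matrix identity, so on this half you and the paper agree.

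The sufficiency half, however, is not actually proved in your proposal. You correctly observe that rescaling the constant design only realises tuples with $\binom{v}{s}\mid\mu_0$, and your $v=4$, $s=2$ example shows this is genuinely restrictive. But from there you only list strategies (induction by deletion/restriction, or building from near-partitions into $s$-blocks) without carrying any of them through; in particular you do not establish that the image of the coefficient map is the full lattice cut out by the recursion, which is the entire content of the sufficiency direction. This is a real gap, not a routine omission: the existence of integral designs with prescribed $\mu_0$ equal to the minimal positive integer allowed by the divisibility constraints is a nontrivial combinatorial construction.

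For comparison, the paper does not attempt to reprove sufficiency at all: Theorem~\ref{Existence of integral designs} is quoted from Graver--Jurkat, and only the necessity direction is re-derived (in the inclusion-matrix passage just before Proposition~\ref{relationship between coefficients}). When the construction is actually needed later, for James partitions, the paper again defers to \cite{GJ}. So if your aim is to match the paper, you may simply cite \cite{GJ} for sufficiency; if your aim is a self-contained proof, you still owe the construction.
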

The goal of this paper is to prove the equivalent result for $p$-ary designs and to describe the resulting universal designs, up to similarity. Null universal designs are well understood via a James' kernel intersection theorem, which was proved in the context of the representation theory of the symmetric group, but is re-stated in the language of designs as \Cref{kernel}. The existence, or otherwise, of non-null universal designs depends on the partition. We shall conclude our introduction with a number of definitions required to state the main result, and also some facts on divisibility of binomial coefficients.  

\begin{definition}
Let $u$ and $v$ be universal $p$-ary designs for $(a,b)$ with coefficents $\mu_j$ and $\gamma_j$ respectively . We say $u$ and $v$ are similar if there is some $k \in \Fp$ such that $\mu_j = k \gamma_j$.
\end{definition}
We shall now state some well known results on the divisibility of binomial coefficients, as many of the results in the theory of $p$-ary designs involve determining whether certain binomial coefficients are $0 \modp$ or not.

Let $a=\sum_{i=0}^\alpha a_ip^i$ be the base $p$ expansion of $a$; that is $0\le a_i\le p-1$ and $a_\alpha\ne 0$. The $p$-adic valuation $\val(a)$ is the least $i$ such that $a_i$ is non-zero, we call $\alpha$ the $p$-adic length of $a$ and write $l_p(a)=\alpha$. 
\begin{definition}
Let $(a,b)$ be a two part partition, that is $a\ge b >0$. We call a partition James if $\val(a+1)>l_p(b)$, while if $b=p^\beta + \hat{b}$ and $\hat{b} <p^{\val(a+1)} < p^\beta$ we call $(a,b)$ pointed.
\end{definition}

\begin{lemma}\label{Kummer}
Let $p$ be a prime and $a,b\in \mathbb{N}$, then $\val({a+b\choose b})$, the highest power of $p$ that divides ${a+b\choose b}$, is the number of carries that occurs when $a$ and $b$ are added in their base $p$ expansions. 
\end{lemma}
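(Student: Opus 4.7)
The plan is to deduce this classical identity of Kummer from Legendre's formula for the $p$-adic valuation of a factorial. First I would recall Legendre's result that $\val(n!) = \sum_{i\geq 1}\lfloor n/p^i\rfloor$, which after collecting terms admits the compact form $\val(n!) = (n - s_p(n))/(p-1)$, where $s_p(n)$ denotes the sum of the base-$p$ digits of $n$. Writing $\binom{a+b}{b} = (a+b)!/(a!\,b!)$ and applying this identity yields
\[
\val\binom{a+b}{b} = \frac{s_p(a) + s_p(b) - s_p(a+b)}{p-1}.
\]

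The remaining step is to identify this quantity with the number $c(a,b)$ of carries occurring when $a$ and $b$ are added in base $p$. I would argue digit-by-digit: in the absence of a carry at position $i$, the $i$-th digit of $a+b$ equals $a_i + b_i + \varepsilon_i$, where $\varepsilon_i\in\{0,1\}$ is the carry incoming from position $i-1$, and this contributes $\varepsilon_i$ to the discrepancy $s_p(a) + s_p(b) - s_p(a+b)$ (cancelling the $+1$ added to position $i$ by the preceding carry). A carry out of position $i$, on the other hand, replaces the local digit $a_i + b_i + \varepsilon_i$ by $a_i + b_i + \varepsilon_i - p$ while depositing a $1$ at position $i+1$, contributing a net $p - 1$ at that position once all such deposits are accounted for. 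Summing over positions gives $s_p(a) + s_p(b) - s_p(a+b) = (p-1)\,c(a,b)$, and combining with the displayed formula yields the lemma.

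The main obstacle is bookkeeping when carries cascade: one must verify that each carry is counted exactly once, regardless of whether it is triggered by a raw digit sum $a_i + b_i \geq p$ or only after an incoming carry tips the position over the threshold. This is most cleanly handled by induction on $\max(l_p(a), l_p(b))$ (or by analyzing schoolbook addition position by position while tracking $\varepsilon_i$), and apart from this careful accounting the proof is purely computational and independent of any design-theoretic input.
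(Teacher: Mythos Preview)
Your argument is the standard correct proof of Kummer's theorem via Legendre's formula, and the bookkeeping you outline for the carry count is sound. Note, however, that the paper does not actually prove this lemma: it is stated without proof as a classical fact (alongside Lucas' theorem and its corollary), so there is no ``paper's own proof'' to compare against. Your proposal simply supplies the well-known justification that the paper chose to omit.
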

\begin{lemma}\label{binomial product}
Let $a=\sum_{i=0}^r a_ip^i$ and $b=\sum_{i=0}^r b_ip^i$, with $0\le a_i,b_i\le p-1$. Then 
$${a\choose b}\equiv{a_0 \choose b_0}{a_1 \choose b_1}\cdots{a_r \choose b_r} \modp.$$
In particular, ${a\choose b}\equiv 0 \modp$ if and only if some $a_i<b_i$.
\end{lemma}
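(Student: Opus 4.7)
The plan is to prove Lucas's theorem via the Frobenius identity $(1+x)^p \equiv 1+x^p \pmod{p}$ in $\mathbb{F}_p[x]$, which follows from the fact that $\binom{p}{k} \equiv 0 \pmod{p}$ for $1 \le k \le p-1$. Iterating this (a quick induction on $i$) gives $(1+x)^{p^i} \equiv 1+x^{p^i} \pmod{p}$.

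Next I would expand $(1+x)^a$ in two different ways. On one hand, the coefficient of $x^b$ is $\binom{a}{b}$. On the other hand, writing $a = \sum_{i=0}^r a_i p^i$ and using the Frobenius identity,
\[
(1+x)^a = \prod_{i=0}^r \bigl((1+x)^{p^i}\bigr)^{a_i} \equiv \prod_{i=0}^r (1+x^{p^i})^{a_i} \pmod{p}.
\]
Expanding each factor by the binomial theorem gives
\[
\prod_{i=0}^r \sum_{k_i=0}^{a_i} \binom{a_i}{k_i} x^{k_i p^i},
\]
so the coefficient of $x^b$ in the product is $\sum \prod_{i=0}^r \binom{a_i}{k_i}$, where the sum runs over tuples $(k_0,\dots,k_r)$ with $\sum k_i p^i = b$ and $0 \le k_i \le a_i \le p-1$.

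The key step is then to observe that, since each $k_i$ lies in $\{0,1,\dots,p-1\}$, the equation $\sum k_i p^i = b$ forces $k_i = b_i$ by uniqueness of the base-$p$ expansion of $b$. So the sum collapses to the single term $\prod_i \binom{a_i}{b_i}$, giving the claimed congruence. The ``in particular'' statement is then immediate: since $0 \le a_i,b_i \le p-1$, each factor $\binom{a_i}{b_i}$ either vanishes (precisely when $a_i<b_i$) or is a nonzero integer less than $p$ and hence nonzero mod $p$, so the product vanishes mod $p$ iff some $a_i<b_i$.

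I do not expect any serious obstacle here: the only subtlety worth flagging is ensuring that the indexing range $0 \le k_i \le a_i \le p-1$ is genuinely enforced before invoking uniqueness of base-$p$ digits, since if the $k_i$ could exceed $p-1$ the argument would break down. Everything else is a routine comparison of coefficients.
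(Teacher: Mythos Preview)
Your argument is the standard and correct proof of Lucas's theorem. The paper itself does not prove this lemma at all: it is stated among a list of well-known facts on divisibility of binomial coefficients in the introduction, with no proof given. So there is no ``paper's own proof'' to compare against; your generating-function approach via the Frobenius identity $(1+x)^{p}\equiv 1+x^{p}\pmod{p}$ is exactly the classical proof one would expect, and it is complete as written.
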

\begin{lemma}\label{binomial}
Let $a,b\in \mathbb{N}$. The binomial coefficients ${a+1 \choose 1},{a+2 \choose 2},\dots{a+b \choose b}$ are all divisible by $p$ if and only if $a\equiv -1 \hspace{2pt} ( \emph{mod } p^{l_p(b)} )$.
\end{lemma}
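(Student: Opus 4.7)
The plan is to translate the divisibility statement into a carry condition on base-$p$ additions using Kummer's theorem (\Cref{Kummer}): $p \mid \binom{a+j}{j}$ if and only if the base-$p$ addition $a + j$ produces at least one carry. The lemma then reduces to the purely combinatorial claim that every $1 \le j \le b$ produces a carry when added to $a$ in base $p$ if and only if the trailing digits of $a$ (as many as $l_p(b)$ of them) are all equal to $p-1$, which is exactly the congruence $a \equiv -1 \pmod{p^{l_p(b)}}$.

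For the forward direction I would fix an arbitrary $1 \le j \le b$ and set $i = \val(j)$, the least index at which $j$ has a nonzero digit. Because $j \le b$, this index $i$ lies inside the block of trailing digits of $a$ that are all $p-1$, so at position $i$ one gets $a_i + j_i \ge (p-1) + 1 = p$, which forces a carry. Kummer then yields $p \mid \binom{a+j}{j}$.

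For the converse I would argue contrapositively: if the congruence fails, let $i$ be the smallest position within the relevant range where $a_i < p-1$, and take $j = p^i$ as a witness. By the definition of $l_p(b)$ this $j$ lies in $\{1, \dots, b\}$. The addition $a + p^i$ simply increments position $i$ by one, and since $a_i + 1 < p$ no carry is produced at any position. Kummer then gives $\binom{a+p^i}{p^i} \not\equiv 0 \pmod p$, contradicting the hypothesis.

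No deep obstacle stands in the way of this argument; the only point requiring care is the index bookkeeping between $l_p(b)$, the valuation $\val$, and the count of trailing $(p-1)$-digits of $a$, in particular checking that the witness $j = p^i$ constructed in the converse genuinely satisfies $j \le b$. Once this is set up, the carry analysis is immediate.
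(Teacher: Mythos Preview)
The paper states this lemma without proof, so there is no argument to compare against. Your strategy via Kummer's theorem is the natural one, and your converse (the contrapositive with witness $j=p^i$) is correct.

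There is, however, a genuine gap in your forward direction, and it reflects an off-by-one error in the lemma as stated. The congruence $a\equiv -1\pmod{p^{l_p(b)}}$ forces only the digits $a_0,\dots,a_{l_p(b)-1}$ to equal $p-1$. But since $l_p(b)$ is the index of the \emph{leading} base-$p$ digit of $b$, we have $p^{l_p(b)}\le b$, so $j=p^{l_p(b)}$ is a legitimate value in $\{1,\dots,b\}$ with $\val(j)=l_p(b)$; this index does \emph{not} lie in the block of digits that the congruence controls. Your sentence ``this index $i$ lies inside the block of trailing digits of $a$ that are all $p-1$'' is exactly where the argument breaks. Concretely, for $p=2$, $a=1$, $b=2$ one has $l_p(b)=1$ and $a\equiv -1\pmod 2$, yet $\binom{3}{2}=3$ is odd.

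The correct exponent is $l_p(b)+1$: one needs $a\equiv -1\pmod{p^{l_p(b)+1}}$, equivalently $\val(a+1)>l_p(b)$, which is precisely the paper's definition of a James partition. With this correction your forward argument goes through, since then $\val(j)\le l_p(b)$ for every $1\le j\le b$ and the digit $a_{\val(j)}$ is indeed $p-1$. The paper's subsequent uses of the lemma (in particular to characterise James partitions) are consistent with this corrected exponent, so the defect is in the statement rather than in your method.
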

\begin{remark}
This gives an alternative characterisation of a James partition, in particular $\lambda=(a,b)$ is James if and only if $a \equiv -1 \hspace{2pt} ( \emph{mod } p^{l_p(b)} )$, or equivalently $l_p(b) < \val(a+1)$ for all $i<r$.
\end{remark}
We can now state our main result:
\begin{theorem}\label{combinatorial main theorem}
Let $a,b\in \mathbb{N}$, with $a\ge b$ and let $u$ be a non-null universal $p$-ary design for $(a,b)$. If $(a,b)$ is neither pointed or James, then $u$ is similar to the constant design. If $(a,b)$ is James then $u$ is unique up to similarity, while if $(a,p^\beta+\hat{b})$ is pointed then $u=u'+c$ where $u'$ is non-null only as a $\hat{b}$-design, while $c$ is similar to the constant design. 
\end{theorem}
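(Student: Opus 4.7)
The plan is to reduce the classification of non-null universal $p$-ary designs to the problem of determining which coefficient sequences $(\mu_0, \mu_1, \ldots, \mu_b)$ are realisable, and then to invoke \Cref{kernel} to absorb the null-design indeterminacy. The first step is to establish a $p$-ary analogue of \Cref{Existence of integral designs}: a double-counting argument, summing $\hat{u}(W)$ over $(j+1)$-sets $W \supseteq Z$ in two ways, shows that any universal $p$-ary design for $(a,b)$ must satisfy
\[
(a+b-j)\mu_{j+1} \equiv (b-j)\mu_j \pmod p, \qquad 0 \le j < b.
\]
This immediately confirms that the constant design $c \equiv 1$, with coefficients $\mu_j = \binom{a+b-j}{b-j}$, is a universal design, and it reduces the problem to a linear-algebra question about this recurrence combined with realising its solutions by an actual function on $[v]^b$.

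Next I would analyse the recursion according to the arithmetic of $a+b-j$ modulo $p$. When $a+b-j \not\equiv 0 \pmod p$, the value $\mu_{j+1}$ is forced by $\mu_j$; when $a+b-j \equiv 0 \pmod p$ and $b-j \not\equiv 0 \pmod p$, we must have $\mu_j = 0$ and $\mu_{j+1}$ becomes a free parameter. Using \Cref{Kummer}, \Cref{binomial product}, and \Cref{binomial} to track which base-$p$ digits of $a$ and $b$ create carries, I would enumerate the critical indices $j$ at which $a+b-j \equiv 0$. In the case where $(a,b)$ is neither James nor pointed there is exactly one free parameter $\mu_0$, and the resulting sequence is a scalar multiple of the constant-design coefficients. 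In the James case the condition $a \equiv -1 \pmod{p^{l_p(b)}}$ forces $\mu_0 = 0$ and pushes the single degree of freedom to a later index, producing a one-dimensional family of non-null coefficient sequences distinct from the constant design. In the pointed case the hypothesis $\hat{b} < p^{\val(a+1)} < p^\beta$ is arranged precisely so that the index $j = \hat{b}$ contributes an additional free parameter, giving a two-dimensional family corresponding to the decomposition $u = u' + c$.

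Having pinned down the possible coefficient sequences, I would address existence by exhibiting one realisation per extremal sequence: the constant design covers the $c$ summand, and for the James and pointed cases I would build the extra design explicitly, for instance by averaging a natural $\hat{b}$-level combinatorial object, suitably signed, over extensions to $b$-subsets. Uniqueness up to similarity then follows because any two designs with the same coefficient sequence differ by a null design, which is classified by \Cref{kernel}. The main obstacle is showing that the recursion together with realisability produces exactly the free parameters claimed in the James and pointed cases; this requires a careful digit-by-digit analysis, and, crucially, the construction of a non-constant realisation $u'$ that is non-null only at level $\hat{b}$, since the recursion alone supplies only necessary conditions and the non-trivial arithmetic of James and pointed partitions has to enter through the realisability step.
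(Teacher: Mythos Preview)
Your one-step recurrence $(a+b-j)\mu_{j+1}\equiv(b-j)\mu_j\pmod p$ is correct but strictly weaker than the full system of constraints a universal design must satisfy, and this is where the plan breaks. Whenever both $a+b-j$ and $b-j$ are divisible by $p$ the one-step relation says nothing, so your free-parameter count overshoots. Concretely, take $p=2$ and $(a,b)=(4,3)$, which is neither James nor pointed. Your analysis at $j=1$ gives $6\mu_2\equiv 2\mu_1$, i.e.\ $0=0$, so you would declare $\mu_2$ a second free parameter and predict a two-dimensional family. But the identity $A_0^2(v)A_2^3(v)=\binom{3}{2}A_0^3(v)$ applied to $u$ yields $\binom{3}{2}\mu_0=\binom{7}{2}\mu_2$, i.e.\ $\mu_0\equiv\mu_2\pmod 2$, collapsing the family back to one dimension. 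The missing constraints are precisely these multi-step relations $\binom{b-j}{t-j}\mu_j=\binom{v-j}{t-j}\mu_t$ for \emph{all} pairs $j<t$, not just $t=j+1$.

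The paper exploits exactly these non-consecutive relations: it records which indices $j$ can carry a non-zero $\mu_j$ (\Cref{non-null designs 2}), places a partial order on that set via $i\ge_X j\iff \binom{b-j}{i-j}\not\equiv 0$, and shows (\Cref{connected components 1}, \Cref{connected components 2}) that this poset has one connected component unless $(a,b)$ is pointed, in which case it has two. Each component contributes one degree of freedom, which is the correct count. Your recursion only sees the edges $j\to j+1$ of this poset and misses the longer ones that do the work; you would be forced to recover the lost relations through an ad hoc realisability argument, which is much harder than simply starting from \Cref{relationship between coefficients}. The existence side of your sketch is also thin: the paper's construction for the pointed case (building $u_Y$, then $u_{\thickbar X}$, then summing against an auxiliary $\hat b$-design $\mathcal U$ of block size $b-1$) is delicate and not obviously an ``averaging'' in the sense you suggest.
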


\section{Uniqueness of $p$-ary Designs}

Universal null $p$-ary designs are well understood, due to the work of James on the representation theory of the symmetric group. James' well-known kernel intersection theorem gives a characterisation of the Specht module $S^{(a,b)}$ as the collection of all null universal $p$-ary designs for $(a,b)$ \cite{James 77}. 

\begin{theorem}\label{kernel}

Let $X,Y\subseteq [v]$ with $|X| = |Y|=b$ and $X\cap Y=\emptyset $. Let $f:X\to Y$ be a bijection. Define 
$$
u:[v]^b\to \Fp
$$
by, 
$$u(Z)=\begin{cases} (-1)^{|Z\cap Y|} &\text{if } (Z\subseteq X\cup Y) \wedge(\forall x\in X)( x\in Z \implies f(x)\not\in Z) \\
            0 &\text{otherwise }.\end{cases} $$
Then $u$ is a null $p$-ary design for $(v-b,b)$. Moreover any null $p$-ary design for $(v-b,b)$ is a linear combination of designs of this form.
\end{theorem}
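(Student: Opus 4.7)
The plan is to address the two claims in the theorem separately: first, that each $u$ is a null universal $p$-ary design for $(v-b,b)$; second, that every such null design is a linear combination of designs of this form.

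For the first claim, I would parametrise the support of $u$. Since $u(Z)$ vanishes unless $Z \subseteq X \cup Y$ is a $b$-subset meeting each pair $\{x, f(x)\}$ in at most one point, and since $|Z| = b$ equals the number of pairs, any $Z$ in the support meets every pair in exactly one point. Such $Z$ are indexed by subsets $S \subseteq X$ via $Z_S := S \cup f(X \setminus S)$, with $u(Z_S) = (-1)^{b - |S|}$. Now fix $W \subseteq [v]$ with $|W| = j < b$ and compute $\hat{u}(W)$. If $W \not\subseteq X \cup Y$, or if $W$ contains some pair $\{x, f(x)\}$ in full, then no $Z_S$ contains $W$ and $\hat{u}(W) = 0$ trivially. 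Otherwise $W$ meets exactly $j$ of the $b$ pairs, forcing the choice of element in each of these and leaving the remaining $b - j \geq 1$ pairs free. The sum over admissible $S$ then factorises as a product over pairs, in which each free pair contributes a factor $1 + (-1) = 0$, so $\hat{u}(W) = 0$.

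For the second claim, I would appeal directly to James' kernel intersection theorem as cited in the surrounding discussion. Under the standard identification of functions $[v]^b \to \Fp$ with the permutation module on $b$-subsets, the space of null universal $p$-ary designs for $(v-b, b)$ corresponds to the Specht module $S^{(v-b, b)}$, and the designs $u$ constructed here correspond to the polytabloids for tableaux of shape $(v-b,b)$ (with the two-element columns determined by $f$). Since the polytabloids span $S^{(v-b, b)}$, the spanning statement follows.

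The first claim is an elementary combinatorial verification; the decisive observation is that each free pair contributes a factor $1 + (-1) = 0$ and that there is at least one free pair whenever $|W| < b$. The substance of the theorem really lies in the second claim, which is essentially James' theorem itself, so the plan is to invoke it rather than reprove it, together with the routine translation between polytabloids and the language of bijections $f : X \to Y$ used here.
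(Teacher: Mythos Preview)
Your proposal is correct, and in fact it goes slightly beyond what the paper does: the paper does not prove this theorem at all but simply restates James' kernel intersection theorem in the language of designs and cites \cite{James 77}. Your elementary verification of the first claim (via the factorisation over pairs, each free pair contributing $1+(-1)=0$) is a welcome addition, and your treatment of the second claim---invoking James' theorem and identifying the $u$'s with polytabloids of shape $(v-b,b)$---is exactly the paper's intended justification.
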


Non-null designs also play an important role in the representation theory of the symmetric group, as they determine certain non-split extensions of Specht modules, investigated by the author in \cite{Jolliffe}. \Cref{Existence of integral designs} describes the relationship between the coefficients of integral designs, and a similar analysis determines when a $p$-ary $t$-design is also a $j$-design.

The \textit{inclusion matrix,} $A_{i}^b(v)$, where $i\le b \le v$, is  the ${v \choose i} \times {v \choose b}$ matrix whose rows are indexed by subsets of $[v]:=\{1,2,\dots, v\}$ of size $i$ and whose columns are indexed by subsets of $[v]$ of size $b$. The entry corresponding to position $X,Y$ is $1$ if $X\subseteq Y$ and $0$ otherwise. Gottlieb showed matrix is of full rank over fields of characteristic 0 \cite{Gottlieb}. If $u$ is an integral design of block size for $(v-b,b)$, then considering $u$ as a vector of length ${v\choose b}$, we see that 
$$
A_{i}^b(v)u=\mu_i \mathbf{1_i},
$$
where $\mathbf{1_i}$ is the vector of length ${v \choose i}$ consisting of 1's. It is clear that 
$$
A_{j}^i(v)A_{i}^b(v)={b-j\choose i-j} A_{j}^b(v),
$$
and thus 
$$
{v-i\choose i-j}\mu_i={b-j\choose i-j}\mu_j,
$$
proving the necessity of the conditions in \Cref{Existence of integral designs}.

\begin{prop}\label{relationship between coefficients}
Let $u:[v]^b\to k$ be a $p$-ary $t$-design of block size $b$ on a set of size $v$ with coefficient $\mu_t$. Let $j\le b$ be such that ${b-j\choose t-j}\not\equiv 0 \modp$, then $u$ is also a $j$-design, with coefficient 
$$
\mu_j=\frac{{v-j\choose t-j}}{{b-j\choose t-j}}\mu_t.
$$
\end{prop}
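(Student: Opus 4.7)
The plan is to mimic the integral-case argument given in the paragraph preceding the proposition, now working over $\mathbb{F}_p$ and being careful about divisions. Viewing $u$ as a column vector of length $\binom{v}{b}$, the hypothesis that $u$ is a $t$-design translates to the matrix equation
\[
A_t^b(v)\, u = \mu_t\, \mathbf{1}_t,
\]
and I want to extract a similar equation with $t$ replaced by $j$.

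The main identity I will use is the combinatorial factorisation
\[
A_j^t(v)\, A_t^b(v) \;=\; \binom{b-j}{t-j}\, A_j^b(v),
\]
which holds over $\mathbb{Z}$ (for a $j$-set $X$ and a $b$-set $Y$ with $X\subseteq Y$, the number of intermediate $t$-sets is $\binom{b-j}{t-j}$), hence also over $\mathbb{F}_p$. Applying $A_j^t(v)$ to both sides of the design equation and using this identity gives
\[
\binom{b-j}{t-j}\, A_j^b(v)\, u \;=\; A_j^t(v)\bigl(\mu_t \mathbf{1}_t\bigr) \;=\; \mu_t \binom{v-j}{t-j}\, \mathbf{1}_j,
\]
where the last step uses that each $j$-subset of $[v]$ is contained in exactly $\binom{v-j}{t-j}$ many $t$-subsets, so $A_j^t(v)\mathbf{1}_t = \binom{v-j}{t-j}\mathbf{1}_j$.

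At this point the hypothesis $\binom{b-j}{t-j}\not\equiv 0\pmod p$ lets me invert that scalar in $\mathbb{F}_p$, yielding
\[
A_j^b(v)\, u \;=\; \frac{\binom{v-j}{t-j}}{\binom{b-j}{t-j}}\, \mu_t\, \mathbf{1}_j,
\]
which is precisely the statement that $u$ is a $j$-design with the claimed coefficient. There is no serious obstacle here — the argument is purely linear-algebraic and is essentially the characteristic-$p$ transcription of Graver–Jurkat's calculation; the only subtlety is the non-vanishing hypothesis, which is exactly what is needed to divide by $\binom{b-j}{t-j}$ in $\mathbb{F}_p$.
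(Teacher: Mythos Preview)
Your proof is correct and follows essentially the same approach as the paper: both arguments use the factorisation $A_j^t(v)A_t^b(v)=\binom{b-j}{t-j}A_j^b(v)$, apply $A_j^t(v)$ to the design equation $A_t^b(v)u=\mu_t\mathbf{1}_t$, and then invert the scalar $\binom{b-j}{t-j}$ in $\mathbb{F}_p$ using the non-vanishing hypothesis. Your version is slightly more expository (you spell out why $A_j^t(v)\mathbf{1}_t=\binom{v-j}{t-j}\mathbf{1}_j$), but there is no substantive difference.
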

\begin{proof}
\begin{align*}
{b-j\choose t-j} A_{j}^b(v)u&=A_{j}^t(v)A_{t}^b(v)u  \\
                            &=A_{j}^t(v)\mu_t\mathbf{1_t}\\
                            &={v-j\choose t-j}\mu_t\mathbf{1_t}.
\end{align*}
\end{proof}
\begin{remark}
Wilson \cite{Wilson} showed that there are examples of $t$-designs which are not $j$ designs whenever ${b-j\choose t-j}\equiv 0 \modp$, which is very different to the behaviour of integral designs.
\end{remark}
In light of this result, to check a design is universal it suffices to check that it is a $b-p^l$-design for all $l\le l_p(b)$. 
\begin{prop}
Let $\lambda=(a,b)$. A design for $\lambda$ is universal if and only if it is a $(b-p^l)$-design for all $l\le l_p(b)$.
\end{prop}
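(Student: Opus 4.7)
The forward direction is immediate: by definition a universal design for $(a,b)$ is a $j$-design for every $j \le b$, so in particular for $j = b - p^l$ with $l \le l_p(b)$.

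For the converse, I would use Proposition \ref{relationship between coefficients} as the main tool, combined with Lucas' theorem (\Cref{binomial product}). Suppose $u$ is a $(b - p^l)$-design for every $l \le l_p(b)$. Fix any $j < b$ and set $m := b - j \ge 1$. The plan is to find some $l \le l_p(b)$ such that $u$'s being a $(b-p^l)$-design propagates down to a $j$-design via Proposition \ref{relationship between coefficients}; concretely, I need $\binom{b-j}{(b-p^l)-j} = \binom{m}{m - p^l} = \binom{m}{p^l}$ to be nonzero modulo $p$.

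By Lucas' theorem, $\binom{m}{p^l} \not\equiv 0 \pmod{p}$ precisely when the coefficient of $p^l$ in the base-$p$ expansion of $m$ is nonzero. Since $m \ge 1$, at least one digit $m_l$ of $m$ is nonzero; choose $l$ to be any such index. Because $m \le b$, this $l$ satisfies $l \le l_p(m) \le l_p(b)$, so $u$ is a $(b-p^l)$-design by hypothesis. Also $m_l \ne 0$ forces $p^l \le m$, so $b - p^l \ge j$ and the indices make sense. Applying \Cref{relationship between coefficients} with $t = b - p^l$ then shows $u$ is a $j$-design, completing the argument.

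The argument is essentially a digit-chasing application of Proposition \ref{relationship between coefficients}, so there is no serious obstacle; the only thing to be careful about is verifying that for each $j < b$ some admissible $l$ exists within the range $l \le l_p(b)$, which is why the bound $l_p(b)$ (rather than a larger quantity) appears in the statement. Writing $b - j$ in base $p$ and picking any nonzero digit handles this uniformly.
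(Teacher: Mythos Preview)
Your proof is correct and follows essentially the same route as the paper: for the nontrivial direction you pick an $l$ with $(b-j)_l \ne 0$ and invoke \Cref{relationship between coefficients} with $t = b - p^l$. The only cosmetic difference is that you reach the digit criterion $\binom{b-j}{p^l}\not\equiv 0\modp \iff (b-j)_l\ne 0$ via Lucas' theorem (\Cref{binomial product}) whereas the paper phrases it through Kummer's carry count (\Cref{Kummer}); you are also a bit more explicit in checking $l \le l_p(b)$ and $b-p^l \ge j$.
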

\begin{proof}
Of course a universal design is a $(b-p^l)$-design. A $(b-p^l)$-design, is also a $j$ design for all $j<b-p^l$ with ${b-j \choose b-p^l-j}\ne 0$; that is, for any $j$ such that the sum $(b-j-p^l)+p^l$ has no carries in $p$-ary notation, by \Cref{Kummer}. This is precisely those $j$ for which the coefficient of $p^l$ in the $p$-ary expansion of $b-j$, which we shall denote $(b-j)_l$, is non zero. If $j<b$, then some $(b-j)_l\ne 0$, and as $u$ is a $(b-p^l)$-design $u$ is also a $j$-design by \Cref{relationship between coefficients}.
\end{proof}
Wilson has determined when non-null $p$-ary $t$-designs exist. 
\begin{theorem}\label{Wilson}\cite{Wilson}
Let $t\le b\le v-t$. Then there is a non-null $p$-ary $t$-design of block size $b$ if and only if 
$$
{b-i\choose t-i}\equiv 0 \modp \hspace{12pt} \text{implies}\hspace{12pt} {v-i\choose t-i}\equiv 0 \modp
$$
for all $i\le t$.
\end{theorem}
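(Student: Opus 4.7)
The natural strategy is to split the theorem into necessity and sufficiency.

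Necessity follows essentially from the argument already used in \Cref{relationship between coefficients}. Suppose $u$ is a non-null $t$-design with coefficient $\mu_t \neq 0$. Applying $A_i^t(v)$ to the identity $A_t^b(v) u = \mu_t \mathbf{1_t}$ and using $A_i^t(v) A_t^b(v) = {b-i\choose t-i} A_i^b(v)$ gives
$$
{b-i\choose t-i} A_i^b(v) u = {v-i\choose t-i} \mu_t \mathbf{1_i}.
$$
If ${b-i\choose t-i} \equiv 0 \modp$, the left-hand side vanishes, forcing ${v-i\choose t-i} \mu_t \mathbf{1_i} = 0$. Since $\mu_t \neq 0$ and $\mathbf{1_i}$ is a nonzero vector whenever $i\le v$, we conclude ${v-i\choose t-i} \equiv 0 \modp$, as required.

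For sufficiency, a non-null $t$-design of block size $b$ exists precisely when $\mathbf{1_t}$ lies in the $\Fp$-image of $A_t^b(v)$. The constant function $u = c\mathbf{1_b}$ gives $A_t^b(v)u = c {v-t\choose b-t} \mathbf{1_t}$, immediately settling the case ${v-t\choose b-t} \not\equiv 0 \modp$ by taking any $c \neq 0$. So the half of sufficiency corresponding to this situation is essentially automatic, and the work lies entirely in the opposite regime.

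The genuinely difficult case is when ${v-t\choose b-t} \equiv 0 \modp$: here the constant design is null and a non-constant construction is needed. My plan would be to analyse the $\Fp$-image of $A_t^b(v)$ via its Smith normal form over $\mathbb{Z}$; the invariant factors are $p$-powers determined by the binomial carry data of the $p$-adic expansions of $b$, $t$ and $v$, and the theorem's hypothesis is precisely the condition which guarantees that $\mathbf{1_t}$ survives modulo $p$ as a class in the image. A more combinatorial alternative would be to argue dually, by characterising
$$
\ker\bigl(A_t^b(v)^T\bigr) = \{\, f : [v]^t \to \Fp \ : \ \textstyle\sum_{X\subseteq Y,\, |X|=t} f(X) = 0 \text{ for all } Y \in [v]^b\,\}
$$
in the style of \Cref{kernel} and then verifying that $\mathbf{1_t}$ is orthogonal to this kernel exactly under the hypothesis. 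The main obstacle is this sufficiency direction in the degenerate case: no elementary construction of a non-null design is apparent, and one must import substantial structural input about inclusion matrices modulo $p$, which is precisely the content of Wilson's analysis.
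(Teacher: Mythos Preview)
The paper does not actually prove \Cref{Wilson}; it is quoted as a result of Wilson \cite{Wilson} and used as a black box. So there is no argument in the paper to compare your proposal against.

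That said, your necessity argument is correct and is exactly the computation the paper carries out just before \Cref{relationship between coefficients}. Your sufficiency discussion, however, is not a proof but a sketch of two possible strategies (Smith normal form of $A_t^b(v)$, or a dual/kernel description), and you explicitly concede that the hard case ${v-t\choose b-t}\equiv 0\modp$ requires ``substantial structural input'' that you do not supply. This is honest, but it means the proposal is incomplete as a proof: neither strategy is carried through, and the Smith-form invariants of inclusion matrices are themselves a nontrivial result (also due to Wilson) rather than something you can invoke for free. In effect your proposal reduces to ``necessity is easy; for sufficiency, see Wilson,'' which is precisely what the paper does by citing \cite{Wilson}.
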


\begin{cor}\label{non-null designs}
There are non-null $p$-ary $(b-p^l)$-designs for $(a,b)$ if and only if $a_l\not\equiv -1 \modp$ or $b\le p^{l+1}$.
\end{cor}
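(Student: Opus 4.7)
The plan is to apply Wilson's theorem (\Cref{Wilson}) with $t = b - p^l$ and $v = a + b$, and to translate the divisibility hypotheses into digit-level statements via Lucas (\Cref{binomial product}) and Kummer (\Cref{Kummer}). Reindexing $i \le t$ by $k = b - i$, Wilson's criterion becomes: for every $k$ with $p^l \le k \le b$,
\[
\binom{k}{p^l} \equiv 0 \pmod{p} \quad \Longrightarrow \quad \binom{a + k}{k - p^l} \equiv 0 \pmod{p}.
\]
Lucas immediately identifies the hypothesis with $k_l = 0$, so attention is restricted to such $k$.

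An easy observation disposes of one case: if $k_l = 0$ and $k \ge p^l$, then necessarily $k \ge p^{l+1}$, so when $b < p^{l+1}$ no qualifying $k$ exists and Wilson's criterion is vacuous. For the remaining range I would write
\[
\binom{a + k}{k - p^l} = \binom{(a + p^l) + (k - p^l)}{k - p^l}
\]
and count carries in this base-$p$ addition via Kummer. The crucial digit is position $l$: since $k_l = 0$, subtracting $p^l$ from $k$ forces a borrow there, producing $(k - p^l)_l = p - 1$.

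If $a_l \ne p - 1$ then adding $p^l$ to $a$ introduces no carry at position $l$, so $(a + p^l)_l = a_l + 1$, and the column sum at that position is $(a_l + 1) + (p - 1) \ge p$. A carry occurs, hence $\binom{a + k}{k - p^l} \equiv 0 \pmod{p}$ for every qualifying $k$. Combined with the vacuous case, this proves the ``if'' direction.

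For the converse, when $a_l = p - 1$ and $b \ge p^{l+1}$, I would produce the single failing witness $k = p^{l+1}$. Then $k - p^l = (p - 1) p^l$ has its sole nonzero digit at position $l$, and $p^{l+1}$ leaves the $l$-th digit of $a$ untouched, so Lucas yields
\[
\binom{a + p^{l+1}}{(p - 1) p^l} \equiv \binom{a_l}{p - 1} = \binom{p - 1}{p - 1} = 1 \pmod{p},
\]
violating Wilson's criterion and ruling out non-null designs. The only real obstacle is keeping the base-$p$ arithmetic at position $l$ straight; beyond that, the argument is a direct application of Lucas, Kummer, and Wilson.
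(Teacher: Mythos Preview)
Your proof is correct and mirrors the paper's exactly: both apply Wilson's criterion with $t=b-p^l$, use Lucas to read the hypothesis as $k_l=0$ (the paper's $(b-j)_l=0$), invoke Kummer on $(a+p^l)+(k-p^l)$ to force a carry at digit $l$ when $a_l\ne p-1$, and take the witness $k=p^{l+1}$ (the paper writes $j=b-p^l$, evidently a slip for $j=b-p^{l+1}$) when $a_l=p-1$. Your boundary bookkeeping is in fact slightly sharper than the paper's---you correctly obtain the vacuous range as $b<p^{l+1}$ and your witness already applies at $b=p^{l+1}$, so the inequality in the stated corollary should really be strict.
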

\begin{proof}
By \Cref{Wilson} a non-null $(b-p^l)$-design exists if  
$$
{b-j\choose p^l}\equiv 0 \modp \hspace{12pt} \text{implies}\hspace{12pt} {a+b-j\choose a+p^l}\equiv 0 \modp.
$$
If $a_l\equiv -1 \modp$ and $b>p^{l+1}$ then setting $j=b-p^l$ we see that non-null designs can not exist. On the other hand if $b\le p^{l+1}$ then ${b-j\choose p^l}\not\equiv 0 \modp $ for all $j<b-p^l$ so there are non-null $(b-p^l)$-designs. Finally, if $a\not\equiv -1 \modp$ then  ${b-j\choose p^l}\equiv 0 \modp $ whenever $(b-j)_l=0$. If $(b-j)_l=0$ then the sum $(a+p^l)+(b-j-p^l)$ necessarily has a carry in $p$-ary notation, so ${a+b-j\choose a+p^l}\equiv 0 \modp$ by \Cref{Kummer}.
\end{proof}
Combining this with the relationship between coefficients, established in \Cref{relationship between coefficients}, we obtain more integers $j$ for which a universal design for $(a,b)$ is null. 
\begin{prop}\label{non-null designs 2}
If a universal design, $u$, for $(a,b)$ is non-null as a $j$-design, then $(b-j)_m+a_m< p$ for all $m< l_p(b)$.
\end{prop}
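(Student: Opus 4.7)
The plan is to induct on $m$ from $0$ up to $l_p(b)-1$, switching between two tools depending on the shape of $b-j$: \Cref{relationship between coefficients} in the generic case, and Wilson's theorem (\Cref{Wilson}) in a degenerate subcase. The inductive hypothesis, $(b-j)_{m'}+a_{m'}<p$ for all $m'<m$, will be used only through its consequence that no carry is generated below position $m$ when performing the base-$p$ addition $a+(b-j)$; in particular the carry into position $m$, call it $c_m$, is zero.

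For the inductive step, the case $(b-j)_m=0$ is immediate, so I set $d:=(b-j)_m\in\{1,\dots,p-1\}$ and split on whether $b-j$ has another nonzero digit. If $b-j>dp^m$, the choice $t:=j+dp^m<b$ is a legitimate input to \Cref{relationship between coefficients} because universality makes $u$ a $t$-design; Lucas's theorem (\Cref{binomial product}) collapses the binomials appearing there to
$$\mu_j\equiv\binom{(v-j)_m}{d}\,\mu_t\modp.$$
Non-vanishing of $\mu_j$ forces $(v-j)_m\ge d$. Since $c_m=0$ gives $(v-j)_m=(a_m+d)\bmod p$, the alternative $a_m+d\ge p$ would produce $(v-j)_m=a_m+d-p<d$, a contradiction; hence $a_m+d<p$, as required.

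The main obstacle is the degenerate subcase $b-j=dp^m$, in which the natural choice $t=j+dp^m$ collapses to $b$ itself, and since $u$ need not be a $b$-design, \Cref{relationship between coefficients} becomes unavailable. To handle this I would invoke Wilson's theorem on the non-null $j$-design $u$, with $i:=b-p^{m+1}$ (non-negative because $m<l_p(b)$ forces $b\ge p^{m+1}$, and bounded by $j$ because $b-j=dp^m<p^{m+1}$). Then $j-i=(p-d)p^m$, and Lucas gives $\binom{b-i}{j-i}=\binom{p^{m+1}}{(p-d)p^m}\equiv 0\modp$, so Wilson forces $\binom{v-i}{j-i}\equiv 0\modp$. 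Since adding $p^{m+1}$ to $a$ leaves position $m$ untouched, Lucas further reduces this binomial to $\binom{a_m}{p-d}$, whose vanishing modulo $p$ is exactly the desired inequality $a_m+d<p$.
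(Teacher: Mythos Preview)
Your proof is correct, but it follows a genuinely different route from the paper's. The paper fixes a single index $m$ with $(b-j)_m\ne 0$, applies \Cref{relationship between coefficients} with the pivot $t=b-p^m$, and then uses \Cref{non-null designs} on the resulting nonzero $\mu_{b-p^m}$ to force $a_m\not\equiv -1\modp$; that digit restriction lets the paper identify the carry pattern of $(a+p^m)+(b-j-p^m)$ with that of $a+(b-j)$, so a single Kummer computation rules out every carry simultaneously. You instead work one digit at a time by induction, choosing the pivot $t=j+(b-j)_m\,p^m$ so that Lucas isolates position $m$ exactly, and you use the inductive hypothesis only to kill the incoming carry $c_m$. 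In the degenerate situation $b-j=dp^m$ you abandon \Cref{relationship between coefficients} altogether and read off $a_m<p-d$ directly from \Cref{Wilson} with $i=b-p^{m+1}$. The paper's one-shot argument is more economical; your case split is slightly heavier but more self-contained, and in particular it handles the boundary $b=p^{m+1}$ without needing to first establish $a_m\ne p-1$, a step where the paper's invocation of \Cref{non-null designs} is least transparent.
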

\begin{proof}
Suppose $u$ is non-null as a $j$-design with coefficient $\mu_j$, and let $m<l_p(b)$ be such that $(b-j)_m\ne 0$. As $u$ is non-null for $j$, we must have $u$ is non-null for $b-p^m$, by \Cref{relationship between coefficients}, as $$\mu_j=\frac{{a+b-j\choose b-p^m-j}}{{b-j\choose b-p^m-j}}\mu_{b-p^m}.$$
For $u$ to be non-null as a $j$-design, we must have ${a+b-j\choose b-p^m-j}\ne 0$. 
\Cref{non-null designs} ensures that $a_m\not\equiv -1 \modp $ and thus $(a+p^m)+(b-j-p^m)$ having no carries is equivalent to $(a)+(b-j)$ having no carries. Using \Cref{Kummer} we see that if $u$ is non-null then $(a)+(b-j)$ has no carries, and therefore $(b-j)_m+a_m< p$ for all $m< l_p(b)$.
\end{proof}
Our next goal is to determine what the relationship is between the non-zero coefficients of a universal design. Let $u$ be a universal design for $(a,b)$, and let $X$ be the set of all $j$ with $(b-j)_m+a_m< p$ for all $m< l_p(b)$. Observe if $j \notin X$ then $u$ must be a null $j$-design, and so $X$ contains all $j$ such that $u$ is a non-null $j$-design. We shall define a partial ordering on $X$ by setting $i\ge_X j$ if $i>j$ and ${b-j \choose i-j}\not\equiv 0\modp$. If $i\ge_X j$ and $\mu_i$ and $\mu_j$ are the coefficients of $u$ corresponding to $i$ and $j$ respectively, then  $\mu_j=\frac{{a+b-i\choose j-i}}{{b-i\choose j-i}}\mu_j$, so we have a relationship between the coefficients appearing in the same connected component of $X$.
\begin{prop}\label{connected components 1}
If $\lambda=(a,b)$ is James, then $X$ has a single connected component.
\end{prop}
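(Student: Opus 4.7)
The plan is to show that under the James hypothesis the partial order $\ge_X$ is in fact a total order, so $X$ is trivially connected.

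The first step is to unpack $X$. By the remark following \Cref{binomial}, the James hypothesis gives $a_m = p-1$ for every $m < l_p(b)$. Substituting into the inequality $(b-j)_m + a_m < p$ defining $X$, I would deduce that $j \in X$ if and only if $(b-j)_m = 0$ for all $m < l_p(b)$; equivalently, $p^{l_p(b)}$ divides $b-j$. Writing $b = b' + b_{l_p(b)}\, p^{l_p(b)}$ with $0 \le b' < p^{l_p(b)}$, this identifies $X$ with the arithmetic progression $\{b' + k\, p^{l_p(b)} : 0 \le k \le b_{l_p(b)}\}$, of size $b_{l_p(b)}+1 \le p$.

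The second step is to show that any two elements of $X$ are $\ge_X$-comparable. For $j = b' + l\, p^{l_p(b)}$ and $i = b' + k\, p^{l_p(b)}$ in $X$ with $i > j$, both $b-j$ and $i-j$ are multiples of $p^{l_p(b)}$, so \Cref{binomial product} reduces ${b-j \choose i-j}$ modulo $p$ to ${b_{l_p(b)}-l \choose k-l}$, which is an ordinary binomial coefficient whose top entry is at most $p-1$ and hence is nonzero $\modp$. Therefore $i \ge_X j$, so $\ge_X$ is a total order on $X$ and $X$ has a single connected component.

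There is no serious obstacle here; the James condition is tailored precisely to collapse the defining inequality of $X$ to a divisibility statement, after which the only surviving digit of $b-j$ sits at position $l_p(b)$ and is automatically at most $p-1$, so Lucas' theorem produces a non-vanishing binomial for free. The only point requiring mild care is verifying that the base-$p$ digits of $b-j$ and $i-j$ align as claimed, which is immediate from the fact that both are multiples of $p^{l_p(b)}$.
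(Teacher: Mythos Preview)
Your proof is correct and follows essentially the same route as the paper: both identify $X$ explicitly as the arithmetic progression $\{\hat b + k\,p^{\beta}\}$ (in the paper's notation $b=\alpha p^{\beta}+\hat b$, $\beta=l_p(b)$) and then invoke Lucas' theorem (\Cref{binomial product}) to see that every relevant binomial coefficient ${b-j\choose i-j}$ is a unit, so that $\ge_X$ is in fact a total order. The only cosmetic discrepancy is that your range $0\le k\le b_{l_p(b)}$ includes the endpoint $j=b$, whereas the paper (implicitly restricting to $j<b$) stops at $(\alpha-1)p^{\beta}+\hat b$; this does not affect connectedness.
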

\begin{proof}
Recall if $\lambda$ is James then $b<p^{\val(a+1)}$, and $a_m\equiv -1 \modp$ for all $m<l_p(b)$. Write $b=\alpha p^\beta + \hat{b}$, where $\beta=l_p(b)$ and $\hat{b}<p^\beta$, and observe, by \Cref{non-null designs 2}, that $X=\{\hat{b},p^\beta+\hat{b},\dots,(\alpha-1)p^\beta+\hat{b}\}$, which, by \Cref{binomial product}, is a single connected component.
\end{proof}
\begin{prop}\label{connected components 2}
If $\lambda=(a,b)$ is not James, and $b= \alpha p^\beta + \hat{b}$ then $X$ has a single connected component, unless $\lambda$ is pointed, in which case $X$ has two connected components, one of which consists only of the element $\hat{b}$.
\end{prop}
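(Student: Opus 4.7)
The plan is to translate everything into base-$p$ digit language and analyse connectivity directly. Set $k=b-j$ and $K=\{b-j:j\in X\}$; by \Cref{non-null designs 2} membership $k\in K$ becomes the digit-wise condition $k_m\le p-1-a_m$ for $m<\beta$, and by \Cref{binomial product} the comparability $i\ge_X j$ translates to digit-wise domination $k'\le k$ in base $p$, where $k'=b-i$. Write $\gamma=\val(a+1)$; non-Jamesness gives $\gamma\le\beta$, so $a_m=p-1$ for $m<\gamma$ (forcing $k_m=0$ there) and $a_\gamma\le p-2$, while the bound $k\le b<p^{\beta+1}$ kills all digits of $k$ above $\beta$. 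Call $k\in K$ \emph{tall} if $k_\beta\neq 0$ and \emph{short} otherwise; $p^\beta\in K$ is digit-wise dominated by every tall element, so the tall part always forms a single component.

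For the pointed case ($\alpha=1$ and $\hat{b}<p^\gamma$), a tall $k\in K$ has $k=p^\beta+k'$ with $0\le k'\le \hat{b}<p^\gamma$; membership in $K$ forces the digits of $k'$ at positions $[0,\gamma)$ --- where all of $k'$'s digits live --- to vanish, so $k'=0$. Thus the only tall element is $p^\beta$, i.e.\ $j=\hat{b}$, and a direct digit-wise check (applying \Cref{binomial product} in both directions) shows $p^\beta$ is incomparable to every short $k\in K$, making $\{\hat{b}\}$ one component. For the short part, $k^{*}:=\sum_{\gamma\le m<\beta}(p-1-a_m)p^m$ lies in $K$ (positive since $a_\gamma\le p-2$) and digit-wise dominates every short $k\in K$ by construction, yielding the second component.

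For the non-James non-pointed case, three subcases arise. If $\gamma=\beta$, then $K=\{p^\beta,2p^\beta,\ldots,\alpha p^\beta\}$ is totally digit-wise ordered by the digit at position $\beta$. If $\gamma<\beta$ and $\alpha\ge 2$, then for any short $k$ the element $k+p^\beta$ lies in $K$ (since $k+p^\beta<2p^\beta\le\alpha p^\beta\le b$) and is a tall upper bound of $k$. If $\gamma<\beta$, $\alpha=1$ and $\hat{b}\ge p^\gamma$, then $p^\gamma+p^\beta\in K$ dominates both $p^\gamma$ and $p^\beta$, while any short $k\in K$ with $k_\gamma=0$ is connected to $p^\gamma$ via $k+p^\gamma\in K$ (admissible because $p^\gamma\le\hat{b}$). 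In each subcase the tall and short parts merge into one component.

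The main obstacle is the pointed case, where I must show $p^\beta$ is genuinely incomparable to every other $k\in K$ rather than merely being minimal among tall elements. Symmetric dominance has to be ruled out using \Cref{binomial product} in both directions, and this crucially uses both $\alpha=1$ and $\hat{b}<p^\gamma$: relaxing either condition admits a bridge of the form $p^\beta+p^\gamma$ (when $\hat{b}\ge p^\gamma$) or $p^\beta+k$ (when $\alpha\ge 2$), and such a bridge destroys the isolation, exactly explaining why these are the only cases in which $X$ fails to be connected.
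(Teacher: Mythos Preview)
Your argument is correct and is essentially the paper's own proof reorganised: both pass to the digit-wise lattice $K=\{b-j:j\in X\}$ with the constraints $k_m\le p{-}1{-}a_m$ for $m<\beta$ and analyse comparability via \Cref{binomial product}, the paper using a single central element $x$ (whose $k$-image is your $k^{*}+(\alpha{-}1)p^{\beta}$) together with meets, while you split into tall/short pieces and exhibit explicit bridges $p^\beta$, $k^{*}$, $k+p^\beta$, $p^\gamma+p^\beta$. One small omission: in your third subcase ($\gamma<\beta$, $\alpha=1$, $\hat b\ge p^\gamma$) you only connect short $k$ with $k_\gamma=0$ to $p^\gamma$, but the remaining short $k$ with $k_\gamma\ne 0$ dominate $p^\gamma$ digit-wise and so are directly comparable to it --- this is the same observation you are already using implicitly for the intermediate element $k+p^\gamma$.
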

\begin{proof}
Observe that $i,j\in X$ are comparable if and only if $(b-i)_m\le (b-j)_m$ for all $m\le l_p(b)$, or $(b-j)_m\le (b-i)_m$ for all $m\le l_p(b)$. Observe also that $(b-i)_m=0$ for all $m<l_p(b)$ for which $a_m\equiv -1 \mod p$. The join of $i,j\in X$, if it exists, is the element $i\vee j = x$ such that $(b-x)_m=\text{max}\{(b-i)_m,(b-j)_m\}$, the meet, $y=i\wedge j$, is the element $y$ such that $(b-y)_m=\text{min}\{(b-i)_m,(b-j)_m\}$. These may fail to be in $X$ as it may be that $(b-x)>b$ or $b-y=0$. If, however, $(b-i)_m$ and $(b-j)_m$ are both non-zero for some $m$ then $i\wedge j \in X$. 

Let $x$ be such that $(b-x)_m=p-1-a_m$ for $m<\beta$ and $(b-x)_\beta=\alpha-1$, and observe that $x\in X$ by \Cref{non-null designs 2}. Clearly $j\in X$ with $j>\hat{b}$ is comparable to $x$. If $j < \hat{b}\in X$, or if $j=\hat{b}$ and $\alpha \ne 1$ then $x\wedge j \in X$. 

It only remains to consider the case where $j=\hat{b}$ and $\alpha = 1$, which, if $\hat{b}>p^{\val(a+1)}$ is clearly comparable to $\hat{b}-p^{\val(a+1)}$, which is in the same component as $x$. It follows that if $\lambda$ is not pointed then there is only one connected component of $X$. 

On the other hand, when $\lambda$ is pointed $\hat{b}$ is not comparable to any other element and thus is in a connected component of its own. This is as no $j<\hat{b}$ is in $X$ as no $j<\hat{b}$ has $(b-j)_m=0$ for all $m<l_p(b)$ where $a_m\equiv -1 \mod p$. Similarly no $j>\hat{b}$ has $(b-j)_\beta \ge 1$, so $j$ and $\hat{b}$ are incomparable .
\end{proof}
\begin{proof}[Proof of uniqueness in \Cref{combinatorial main theorem}]
If $u$ is a universal design for $(a,b)$, then its coefficients are entirely determined by the connected components of $X$, thus an understanding of this poset allows us to determine the possible coefficients of designs. If $(a,b)$ is not pointed, then non-null universal designs, if they exist, are unique up to similarity, while if $(a,b)$ is pointed, then any design must be the sum of two designs, uniquely determined by its coefficients on each of the two connected components of $X$.
\end{proof}
\section{Existence of designs}

In the previous section we have seen a complete characterisation of universal null $p$-ary designs and described, up to similarity, the uniqueness of non-null universal designs. We now move to considering the existence of non-null designs for $(a,b)$. We first consider $p$-ary designs which come from the mod $p$ reduction of integral designs. Clearly the constant design, $c_{(a,b)}$, is an integral design, with coefficients $\mu_i={a+b-i\choose b-i}$, and therefore is null if and only if $(a,b)$ is James.
\begin{prop}
Let $(a,b)$ be a partition which is neither James nor pointed. Then the constant design is the unique, up to similarity, universal $p$-ary design for $(a,b)$.
\end{prop}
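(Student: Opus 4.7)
The plan is to observe that the constant design is automatically a universal design, check that it is non-null exactly when $(a,b)$ is not James, and then invoke the uniqueness part of \Cref{combinatorial main theorem} which was proved in the previous section.

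First I would record that the constant design $c_{(a,b)}$, being the mod $p$ reduction of an integral design, is a $j$-design for every $0\le j \le b$ with coefficients $\mu_j = \binom{a+b-j}{b-j}$. In particular it is always a universal $p$-ary design; what matters is whether any $\mu_j$ is nonzero mod $p$.

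Next I would check non-nullity. By the remark after \Cref{binomial}, the partition $(a,b)$ is James if and only if $a\equiv -1\pmod{p^{l_p(b)}}$, and in that case \Cref{binomial} gives $\binom{a+i}{i}\equiv 0\pmod p$ for every $1\le i\le b$, so that $\mu_j\equiv 0\pmod p$ for all $j<b$ (and $\mu_b = 1$ corresponds only to the trivial $b$-design condition); thus $c_{(a,b)}$ is null. Conversely, if $(a,b)$ is not James then some $\binom{a+i}{i}$ with $1\le i\le b$ is a unit in $\Fp$, and the corresponding $\mu_j$ is nonzero, so $c_{(a,b)}$ is a non-null universal $p$-ary design for $(a,b)$.

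Finally, since $(a,b)$ is not pointed, the uniqueness argument in the proof of \Cref{combinatorial main theorem} above (via \Cref{connected components 2}, which gives a single connected component of $X$) shows that any non-null universal design for $(a,b)$ is determined up to a scalar in $\Fp$ by any one of its nonzero coefficients. Therefore every non-null universal design for $(a,b)$ is similar to $c_{(a,b)}$. The only real work is the bookkeeping in the second step; the combinatorial heart of the statement was already established in \Cref{connected components 2}.
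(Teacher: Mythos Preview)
Your proposal is correct and mirrors the paper's own treatment: the paper does not give a separate proof of this proposition, but states immediately before it that the constant design has coefficients $\mu_i=\binom{a+b-i}{b-i}$ and is null if and only if $(a,b)$ is James, and then relies on the uniqueness established in Section~2 via \Cref{connected components 2}. Your write-up simply makes these two steps explicit, so it is essentially the same argument.
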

\begin{prop}\label{James}
Let $\lambda=(a,b)$, then there exists an integral design which is not similar to the constant design if and only if $\lambda$ is James.
\end{prop}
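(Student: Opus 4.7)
The plan is to apply Theorem~\ref{Existence of integral designs} (Graver--Jurkat) to parameterise integral universal designs for $\lambda$, and then read off when their mod $p$ reductions can fail to be similar to the constant design. By Graver--Jurkat, any integral universal design for $\lambda$ has coefficient sequence $(\mu_j)_{0\le j<b}$ with $\mu_j = r\gamma_j$, where $\gamma_j := \binom{a+b-j}{b-j}$ are the coefficients of $c_{(a,b)}$ and $r = \mu_0/\gamma_0 \in \mathbb{Q}$; conversely, any rational $r$ making all $\mu_j$ integers produces an integral design with those coefficients. A routine common-denominator argument identifies the set of valid scalars with $(1/g)\mathbb{Z}$, where $g := \gcd_{0\le j<b}\gamma_j$, and by Lemma~\ref{binomial} the condition $p \mid g$ is precisely the James condition on $\lambda$.

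For the not-James direction, $p\nmid g$ forces the denominator of any permissible $r$ to be coprime to $p$, so $r$ admits a well-defined reduction $\bar r \in \Fp$. Then $\mu_j \equiv \bar r\gamma_j \pmod{p}$ for every $j$, which is exactly the statement that the design is similar to $c_{(a,b)}$.

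For the James direction, $p\mid g$, so we may set $k = \val(g) \ge 1$ and take $r = 1/p^k$; the resulting integers $\mu_j = \gamma_j/p^k$ satisfy the Graver--Jurkat ratio, so Theorem~\ref{Existence of integral designs} yields an integral universal design $u$ realising them. Maximality of $k$ ensures some $\mu_j$ is not divisible by $p$, whereas every $\gamma_j$ vanishes mod $p$ in the James case; since every scalar multiple of $c_{(a,b)}$ therefore has all coefficients zero mod $p$, the design $u$ cannot be similar to the constant. The only genuinely non-cosmetic step is identifying the valid scalars with $(1/g)\mathbb{Z}$, which is a standard gcd computation; everything else is a direct appeal to Graver--Jurkat together with Lemma~\ref{binomial}.
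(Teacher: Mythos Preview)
Your proof is correct and follows essentially the same line as the paper's: both parameterise integral universal designs via Graver--Jurkat as rational multiples $\mu_j=r\gamma_j$ of the constant coefficients $\gamma_j=\binom{a+b-j}{b-j}$, and then invoke Lemma~\ref{binomial} to see that the minimal $p$-adic valuation $d=\min_j\val(\gamma_j)$ (equivalently your $\val(g)$) is positive precisely when $\lambda$ is James. Your packaging via $g=\gcd_j\gamma_j$ and the identification of admissible scalars with $(1/g)\mathbb{Z}$ is a bit more explicit than the paper's, but the content is the same.
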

\begin{proof}
Any integral design must have coefficients satisfying the conditions of \Cref{Existence of integral designs}, $\mu_{s+1}=\frac{b-s}{a+b-s}\mu_s$ for $0\le s<t$. This means that
$$
\mu_s=\frac{{a+b-s\choose a}}{{a+b\choose a}}\mu_0.
$$
To ensure that some $\mu_i\not\equiv 0 \modp$ we must take $\mu_s=k\frac{{a+b-s\choose a}}{p^d}$ where $k\in \Fp$ is non-zero and $d$ is the least power of $p$ dividing some ${a+b-s\choose a}$ for $s\in \{0,1,\dots,b-1\}$. That is, $d=\text{min}_{s<b}\{\val{a+b-s\choose b}\}$. Observe that 
\begin{align*}
    _j\widehat{c_{(a,b)}}&={a+b-j \choose b-j}\cdot c_{(a+b-j,j)}\\
&=k^{-1}p^d\mu_j \cdot c_{(a+b-j,j)},
\end{align*}
 and so if $p^d$ is a unit in $\Fp$, that is if $d=0$, then $$\psi_{1,j}(k\cdot c_{(a,b)}-u)=0,$$ and $u$ is not similar to the constant design. This means $u$ is similar to the constant design if and only if $p\mid {a+b-j\choose a}$ for all $j \in \{0,1,\dots,b-1\}$, which by \Cref{binomial} is if and only if $\lambda$ is James. 
\end{proof}
\begin{theorem}
The unique, up to similarity, universal $p$-ary design for a James partition $(a,b)$ is the mod $p$ reduction of the integral design with coefficients $\mu_s=\frac{{a+b-s\choose a}}{p^d}$ where $d=\text{min}_{s<b}\{\val{a+b-s\choose b}\}$.
\end{theorem}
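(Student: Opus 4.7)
The plan is to prove existence by applying \Cref{Existence of integral designs} (Graver--Jurkat) to the given coefficients and then to invoke the uniqueness already established in \Cref{combinatorial main theorem}.

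First I would check that the proposed coefficients $\mu_s = \binom{a+b-s}{a}/p^d$ for $0 \le s < b$ are in fact integers. This is immediate from the definition $d = \min_{s<b}\val\binom{a+b-s}{a}$, which guarantees $p^d \mid \binom{a+b-s}{a}$ for every $s$ in the given range. Next, I would verify that a direct manipulation gives
\[
\frac{\mu_{s+1}}{\mu_s} = \frac{\binom{a+b-s-1}{a}}{\binom{a+b-s}{a}} = \frac{b-s}{a+b-s},
\]
which is precisely the Graver--Jurkat recurrence of \Cref{Existence of integral designs}. Hence that theorem produces an integer-valued universal integral design $u \colon [v]^b \to \mathbb{Z}$ realising these coefficients.

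I would then reduce $u$ modulo $p$. Since the partial-sum operation $\hat{\cdot}$ commutes with reduction mod $p$, the resulting $\bar u \colon [v]^b \to \Fp$ is automatically a universal $p$-ary design whose coefficients are the images of the $\mu_s$ in $\Fp$. By the minimality of $d$, at least one of the integers $\mu_s$ has $p$-adic valuation zero, so at least one $\Fp$-coefficient is non-zero and $\bar u$ is non-null. Finally, I would invoke the uniqueness clause of \Cref{combinatorial main theorem} (established in the previous section via the connected components of $X$ in \Cref{connected components 1}) to identify $\bar u$ as the unique non-null universal $p$-ary design for the James partition $(a,b)$ up to similarity.

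I do not foresee a serious obstacle here, since the theorem is essentially a packaging of the integral existence theorem with the coefficient uniqueness already obtained. The only delicate point is the role of $d$: chosen any larger, the coefficients would not all reduce to integers; chosen any smaller, the mod $p$ reduction would annihilate every coefficient and the design would fail to be non-null. The choice $d = \min_{s<b}\val\binom{a+b-s}{a}$ is exactly calibrated so that both conditions hold simultaneously.
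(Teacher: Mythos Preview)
Your proposal is correct and matches the paper's approach: the paper states this theorem immediately after \Cref{James} without a separate proof block, treating it as a direct consequence of the Graver--Jurkat existence result (with the $p^d$ normalisation identified in the proof of \Cref{James}) together with the uniqueness already established in Section~2 via \Cref{connected components 1}. Your write-up simply makes explicit the steps the paper leaves implicit.
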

We have seen that if $(a,b)$ is pointed then the constant design is non-null. We shall now construct another non-null design for $(a,b)$ which is not similar to the constant design, completing the classification.  
\begin{prop}
Let $(a,b)$ be such that $b=p^\beta$ and $\val(a+1)<\beta$. Then there exists a universal $p$-ary design which is null as a $t$-design for all $t>0$ and non-null as a $0$-design. 
\end{prop}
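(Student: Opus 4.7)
The approach is to write $u$ down explicitly as the indicator function of the $b$-subsets of a well-chosen set $W\subseteq[v]$. Since $a\ge b=p^\beta$, we have $v=a+b\ge 2p^\beta$, so there is room to choose $W\subseteq[v]$ with $|W|=w:=2p^\beta-1$. With this $W$ fixed, set
$$
u(Y)=\begin{cases}1 & \text{if } Y\subseteq W,\\ 0 & \text{otherwise.}\end{cases}
$$
A direct count gives $\hat{u}(Z)={w-t\choose b-t}$ when $Z\subseteq W$ and $|Z|=t$, and $\hat{u}(Z)=0$ otherwise. So to prove that $u$ is universal with $\mu_0\ne 0$ and $\mu_t=0$ for $1\le t< b$, it suffices to verify the two divisibility claims
$$
{2p^\beta-1\choose p^\beta}\not\equiv 0\modp \quad\text{and}\quad {2p^\beta-1-t\choose p^\beta-t}\equiv 0\modp \text{ for } 1\le t\le p^\beta-1.
$$

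Both follow from the base-$p$ expansion of $2p^\beta-1$, whose digits are $p-1$ in positions $0,\dots,\beta-1$ and $1$ in position $\beta$. For the first, $p^\beta$ has a single non-zero digit (a $1$) in position $\beta$, so \Cref{binomial product} yields ${w\choose b}\equiv 1\modp$ and hence $\mu_0\ne 0$. For the second, I would rewrite the numerator as $(p^\beta-1)+s$ with $s:=p^\beta-t\in\{1,\dots,p^\beta-1\}$; since $s$ must have a non-zero digit in some position $i<\beta$ and $p^\beta-1$ has digit $p-1$ there, the base-$p$ addition carries at position $i$, and \Cref{Kummer} supplies the divisibility. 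This makes $\hat{u}$ vanish identically at sizes $1,\dots,b-1$, so $u$ is a $t$-design with $\mu_t=0$ for each such $t$.

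The hypothesis $\val(a+1)<\beta$ is not used by the construction itself; it only serves to place us in the pointed regime, so that by the uniqueness discussion $u$ together with the constant design account for the two connected components of the poset $X$. I do not expect any serious obstacle: the only genuine step is identifying the right choice $w=2p^\beta-1$, so that the digits of $w$ both deliver the non-vanishing of ${w\choose b}$ and force the Kummer carry at every intermediate size. Once that $w$ is spotted, the remainder of the verification is a routine application of \Cref{Kummer} and \Cref{binomial product}.
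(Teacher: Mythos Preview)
Your construction is correct and is essentially the same as the paper's: the paper sets $m=a-b+1$ and takes $u$ to be the indicator of $b$-sets disjoint from $[m]$, which is precisely your indicator of $b$-subsets of a set $W$ of size $v-m=2p^\beta-1$. The only cosmetic difference is that the paper invokes \Cref{binomial} for the vanishing of the intermediate coefficients while you argue directly via \Cref{Kummer} and \Cref{binomial product}; your observation that the hypothesis $\val(a+1)<\beta$ is not used in the construction itself is also accurate.
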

\begin{proof}
Let $m=a-b+1$ and define $$u(Y)=\begin{cases}
1 & \text{if } Y\cap [m] = \emptyset \\
0 & \text{otherwise}
\end{cases}$$ for $|Y|=b$.
Then, for $|Z|=s$ $$\hat{u}(Z) = \begin{cases} {a+b-m-s \choose b-s} & \text{if } Z\cap [m]  = \emptyset \\
0 & \text{otherwise}.
\end{cases}
$$ 
By our choice of $m$ the coefficients are ${a-m+1 \choose 1},{a-m+2 \choose 2},\dots,{a-m+b-1 \choose b-1}$ are all divisible by $p$, by \Cref{binomial}. Then $u$ is a universal design which is non-null only as a $0$-design.
\end{proof}
Let $u$ be the design constructed above for the partition $(a,p^\beta)$. We shall modify $u$ to construct a design for a pointed partition $(a,p^\beta+\hat{b})$, where $\hat{b}<p^{\val(a+1)}<p^\beta$, which is non-zero only as a $\hat{b}$-design.

Let $u:[v]^{p^\beta}\to \Fp$ be the design constructed above for the partition $(a,p^\beta)\vdash v$. We shall modify $u$ to construct a design for a pointed partition $(a,p^\beta+\hat{b})$, where $\hat{b}<p^{\val(a+1)}<p^\beta$, which is non-zero only as a $\hat{b}$-design. Let $Y=\{a+p^\beta+1,\dots,a+b\}$, then $Y$ is a set of size $\hat{b}$. 

Define $u_Y:[v]^{p^\beta+\hat{b}}\to \Fp$ by 
$$u_Y(Z)
=\begin{cases} 
u(Z\backslash Y)  & \text{ if } Y\subseteq Z \\
0 & \text{ otherwise },  
\end{cases}$$
and $u^Y:[v]^{p^\beta}\to \Fp$ by
$$u^Y(Z)
=\begin{cases} 
u(Z)  & \text{ if } Y\cap Z = \emptyset\\
0 & \text{ otherwise. }
\end{cases}$$
Given a subset $X\subset [v]$ we denote by $\delta_X:[v]^{|X|}$ the indicator function; that is
$$
\delta_X(Y) = \begin{cases}
1 &\text{ if } X=Y\\
0 &\text{ otherwise.}
\end{cases}
$$
Of course, these functions may not be designs, but we may consider the functions they induce on subsets of $[v]$ as before. 
Consider $_j\widehat{u_Y}:[v]^j\to \Fp$, by grouping terms by the size of their intersection with $Y$. First, consider the case where $\hat{b}<j<b$:
$$
\widehat{u_Y}=(_{j-\hat{b}}\hat{u})_Y+\sum_{y\in Y} (_{j-\hat{b}-1}\hat{u})_{Y\backslash \{y\}}^y+\cdots+(_j\hat{u})^Y.
$$
Each of these terms is $0$, by our choice of $u$, so $_j\hat{u_Y}$ is 0.

Similarly for $j\le \hat{b}$
\begin{align*}
_j\widehat{u_Y}&=\sum_{i=0}^j \sum_{\mid Y'\cap Y\mid=i}\hat{_{j-i}(u)}_{Y'}\\
&=\sum_{\mid Y'\cap Y\mid=j}\hat{(_0u)}_{Y'}\\
&=\mu_0 \cdot _j\delta_Y,
\end{align*} 
where $\mu_0\ne 0$ is the coefficient of $u$ as a 0-design.

Observe that if $Y$ is any subset of $[a+b]$, not necessarily $\{a+p^\beta+1,\dots,a+b\}$, then we may define $u_Y$ as before, by first defining $u$ on subsets of $[a+b]\backslash Y$ of size $p^\beta$.

Let $X\subseteq [a+b]$ of size $b-1=p^{\beta}+\hat{b}-1$. Define $u_{\thickbar{X}}:=\sum_{Y\subseteq X}u_Y$. Then 
$$
\widehat{u_{\thickbar{X}}}=\sum_{Y\subseteq X}\widehat{u_Y},
$$
which is $0$ when restricted to sets of size $j$ if  $\hat{b}<j<b$. When $j\le \hat{b}$,
\begin{align*}
_j \widehat{u_{\thickbar{X}}}&=\sum_{Y\subseteq X} {}_j \widehat{u_Y}\\
                     &=\sum_{Y\subseteq X} \mu_0 \cdot _j\delta_Y\\
                     &={p^\beta-1+\hat{b}-j \choose \hat{b}-j}\mu_0 \cdot _j\delta_X,\\
\end{align*}
which is 0 if $j\ne \hat{b}$. So
$$
_{\hat{b}}\widehat{u_{\thickbar{X}}}=\mu_0 \cdot _{\hat{b}}\delta_X.
$$
If $\mathcal{ U}$ is a non null $p$-ary $\hat{b}$-design of block size $b-1$ and coefficient $\alpha$\, then setting
$$
u_\mathcal{U}:=\sum_{X}\mathcal{U}
(X)u_{\thickbar{X}},
$$
where the sum is over all sets $X$ of size $b-1$ and $\mathcal{U}(X)$ is the coefficient of $X$ in the $\hat{b}$-design $\mathcal{U}$, we see 
\begin{align*}
_{\hat{b}}\widehat{u_\mathcal{U}}&=\sum_{X}\mathcal{U}(X)\widehat{u_{\thickbar{X}}}\\
              &=\sum_{X}\mathcal{U}(X)\mu_0 \cdot _{\hat{b}}\delta_X\\
              &=\alpha \mu_0 \cdot _{\hat{b}}\delta_X,
\end{align*}
and of course 
$$
_j\widehat{u_\mathcal{U}}=0
$$
for all $j\ne \hat{b}$.
\begin{theorem}\label{pointed}
Let $\lambda=(a,b)$ be such that $b=p^\beta+\hat{b}$ and $\hat{b}<p^{\val(a+1)}<b$. Then there is a universal design which is non-null only as a $\hat{b}$-design.
\end{theorem}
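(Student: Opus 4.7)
The plan is to invoke the construction developed in the lead-up to the statement: for any non-null $p$-ary $\hat{b}$-design $\mathcal{U}$ of block size $b-1$ on $[a+b]$ with coefficient $\alpha \ne 0$, set $u_{\mathcal{U}} = \sum_X \mathcal{U}(X) u_{\thickbar{X}}$. The explicit calculations immediately above show ${}_j\widehat{u_\mathcal{U}} = 0$ for $j \ne \hat{b}$ and ${}_{\hat{b}}\widehat{u_\mathcal{U}}$ is the non-zero constant $\alpha \mu_0$, so $u_{\mathcal{U}}$ is the required design. The theorem therefore reduces to producing such a $\mathcal{U}$, equivalently to exhibiting a non-null $p$-ary $\hat{b}$-design for the partition $(a+1,\,b-1)$.

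To do so, I would apply \Cref{Wilson} with $t = \hat{b}$, block size $b-1$, and ambient size $v = a+b$. Substituting $k = \hat{b} - i$, the Wilson condition reads: for each $k \in \{0, 1, \ldots, \hat{b}\}$,
$$
{p^\beta + k - 1 \choose k} \equiv 0 \modp \implies {a + p^\beta + k \choose k} \equiv 0 \modp.
$$
The case $k = 0$ is vacuous. For $k \ge 1$, \Cref{Kummer} shows the left-hand binomial is always $\equiv 0 \modp$: since $1 \le k < p^\beta$, adding $k$ to $p^\beta - 1$ forces at least one carry into position $\beta$. So the substantive content is to show that ${a + p^\beta + 1 \choose 1}, {a + p^\beta + 2 \choose 2}, \ldots, {a + p^\beta + \hat{b} \choose \hat{b}}$ are all divisible by $p$, which by \Cref{binomial} is equivalent to $a + p^\beta \equiv -1 \pmod{p^{l_p(\hat{b})}}$.

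This congruence follows directly from the pointed hypothesis. The inequality $\hat{b} < p^{\val(a+1)}$ gives $l_p(\hat{b}) < \val(a+1)$; the definition of $\val(a+1)$ gives $a \equiv -1 \pmod{p^{\val(a+1)}}$; and $\val(a+1) < \beta$ gives $p^\beta \equiv 0 \pmod{p^{\val(a+1)}}$. Combining these, $a + p^\beta \equiv -1 \pmod{p^{\val(a+1)}}$, which in particular descends to $a + p^\beta \equiv -1 \pmod{p^{l_p(\hat{b})}}$, as needed. The only real obstacle is this Kummer/\Cref{binomial}-based divisibility check, and it is short given the binomial lemmas already in hand.
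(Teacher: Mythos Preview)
Your proposal is correct and follows essentially the same route as the paper: reduce to the existence of a non-null $p$-ary $\hat{b}$-design of block size $b-1$ via the $u_{\mathcal{U}}$ construction, then verify Wilson's criterion by showing the left binomial ${p^\beta+k-1\choose k}$ always vanishes mod $p$ and the right binomial ${a+p^\beta+k\choose k}$ vanishes by \Cref{binomial} together with $a+p^\beta\equiv -1\pmod{p^{l_p(\hat{b})}}$. Your write-up is in fact slightly more explicit than the paper's, spelling out the Kummer carry for the left-hand side and deriving the final congruence from the pointed hypothesis rather than asserting it.
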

\begin{proof}
An element of the form $u_\mathcal{U}$ as described above is such a design, it remains to prove such an element exists; that is that there is a non null $p$-ary $\hat{b}$-design of block size $b-1$. By \Cref{Wilson}, we may construct such a design if (and only if) ${a+b-1-i\choose \hat{b}-i}\equiv 0 \modp$ whenever ${b-1-i\choose \hat{b}-i}\equiv 0 \modp $. Of course  ${b-1-i\choose \hat{b}-i}={p^\beta +\hat{b}-1-i\choose \hat{b}-i} \equiv 0 \modp $ for all $i<\hat{b}$, so it remains to see that ${a+b-1-i\choose \hat{b}-i}\equiv 0 \modp$ for all $i<\hat{b}$; that is, that ${a+p^\beta+j\choose j}\equiv 0 \modp$ for all $j<\hat{b}$. This follows from \Cref{binomial}, as $a+p^\beta \equiv -1 \hspace{2pt} ( \emph{mod } p^{l_p(\hat{b})} )$. 
\end{proof}

\begin{proof}[Existence of $p$-ary designs]
If $(a,b)$ is James, then the construction of Graver and Jurkat \cite{GJ} gives rise to a non-null design. If $(a,b)$ is not James then the constant design is non-null. If $(a,b)$ is pointed then \Cref{pointed} gives a non-null universal design. Completing the proof of \Cref{combinatorial main theorem}, which we state again below to conclude.
\end{proof}
\begin{theorem*}
Let $a,b\in \mathbb{N}$, with $a\ge b$ and let $u$ be a non-null universal $p$-ary design for $(a,b)$. If $(a,b)$ is neither pointed or James, then $u$ is similar to the constant design. If $(a,b)$ is James then $u$ is unique up to similarity, while if $(a,p^\beta+\hat{b})$ is pointed then $u=u'+c$ where $u'$ is non-null only as a $\hat{b}$-design, while $c$ is similar to the constant design. 
\end{theorem*}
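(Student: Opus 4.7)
The plan is to assemble the uniqueness and existence results developed in the two preceding sections into a single statement organised by whether $(a,b)$ is James, pointed, or neither, so the proof reduces to bookkeeping rather than fresh construction.

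First I would reduce uniqueness to the combinatorics of the poset $X$ introduced earlier. By \Cref{relationship between coefficients} any two comparable elements of $X$ have their design coefficients related by a fixed nonzero scalar, so by transitivity the coefficients on any single connected component of $X$ are determined up to one multiplicative constant. Invoking \Cref{connected components 1} and \Cref{connected components 2} then fixes the structure of $X$: a single component in the James and in the generic (neither James nor pointed) cases, and exactly two components in the pointed case, one of which is the singleton $\{\hat{b}\}$. Consequently in the James and generic cases a non-null universal design is unique up to similarity, while in the pointed case every non-null universal design decomposes as $u = u' + c$, with $u'$ realising the coefficient on $\{\hat{b}\}$ and $c$ realising the coefficient on the other component.

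Next I would verify existence, matching each component of $X$ with an explicit design. In the generic case the constant design $c_{(a,b)}$ has $j$-coefficient $\binom{a+b-j}{b-j}$, and by the characterisation of James partitions via \Cref{binomial} at least one such binomial is nonzero mod $p$, so $c_{(a,b)}$ itself is non-null and, by the uniqueness analysis above, every non-null universal design is similar to it. In the James case \Cref{James}, together with the rescaling by the appropriate power of $p$ described there, exhibits a non-null universal design arising from the mod $p$ reduction of the Graver--Jurkat integral construction. In the pointed case the constant design is still non-null (James is excluded), while \Cref{pointed} produces a further design $u'$ which is non-null only as a $\hat{b}$-design; scalar combinations of these two then realise every pair of coefficients permitted by the two-component decomposition of $X$, giving the claimed form $u = u' + c$.

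The main obstacle in this chain is already localised inside \Cref{pointed}: the delicate construction $u_{\thickbar{X}} = \sum_{Y\subseteq X} u_Y$, whose induced functions at every level $j \ne \hat{b}$ must vanish identically, relies on the divisibility of the binomials $\binom{a-m+i}{i}$ guaranteed by the defining inequality $\hat{b} < p^{\val(a+1)} < p^\beta$ of a pointed partition. Once that ingredient is in hand, the theorem is simply the packaging of uniqueness with the three existence constructions.
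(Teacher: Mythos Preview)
Your proposal is correct and follows essentially the same route as the paper: uniqueness is deduced from the connected-component analysis of the poset $X$ (\Cref{connected components 1}, \Cref{connected components 2}), and existence is supplied case-by-case by the constant design, the rescaled Graver--Jurkat integral design (\Cref{James}), and the construction of \Cref{pointed}. The only cosmetic slip is in your final paragraph: the divisibility of the binomials $\binom{a-m+i}{i}$ in the auxiliary $(a,p^\beta)$ construction comes from $b=p^\beta$ rather than directly from the pointed inequality, which instead enters in \Cref{pointed} to guarantee the existence of the auxiliary $\hat{b}$-design $\mathcal{U}$.
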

\section*{Acknowledgements}
This work will appear in the author's PhD thesis prepared at the University of Cambridge and supported by the Woolf Fisher Trust and the Cambridge Trust. This work was done while the author was a visiting scholar at Victoria University of Wellington. The author would like to thank his supervisor Dr Stuart Martin for his encouragement and support.

\thispagestyle{footer}
\end{document}